\newtheorem{theorem}{Theorem}
\newtheorem*{lemma*}{Lemma}
\newtheorem*{remark*}{Remark}
\newtheorem{exercise}{Exercise}
\begin{document}

%%% Title section

\begin{center}
{\Large\bf Some remarks on the direct calculation \\ of Probabilities in Urn Schemes}
\end{center}
\vspace{.1cm}
\begin{center}
{\sc Azam A.~Imomov, Yorqin Khodjaev}\\
\vspace{.4cm}
{\small \it Karshi State University, 17 Kuchabag street, \\
100180 Karshi city, Uzbekistan\\}
{e-mail: {\sl imomov{\_}\,azam@mail.ru}\\
\vspace{.4cm}
{\textit{\small \textbf{Dedicated to our Parents}}}}
\end{center}

\vspace{.3cm}
%%% abstract
\begin{abstract}
    The paper considers urn schemes in which several urns can be involved. Simplified formulas
    are proposed that allow direct calculation of probabilities without the use of elements combinatorics.

\emph{\textbf{Keywords:}} urn, urn schemes, binomial coefficients.

\textbf{2010 AMS MSC:} {Primary: 60A99;  Secondary: 60D99}
\end{abstract}

%%%

\section*{Introduction}

    The urn schemes is one of the simplest models used in the elementary probability theory. Using
    urn schemes, it is convenient to calculate some basic probabilities through conditional probabilities.
    In most cases, when solving various problems, a model with a single urn is considered. For various
    values of the parameters of the scheme, many well-known schemes of probability theory are obtained,
    in particular, a random choice scheme with return as Bernoulli tests, a random choice without return 
    scheme, Ehrenfest diffusion model {\cite{Bingham}}, and P\'{o}lya urn models. These schemes serve as models
    of many real phenomena, as well as methods for their investigation; see, for example, {\cite{PUrn}}, {\cite{Johnson}}. 

    One of the simplest models of urn schemes is the following model. Consider two urns, which we
    denote by the symbols $\Pi _0$ and $\Pi$ respectively. In both urns there are a lot of white and
    black balls. The following operation is allowed. The random number $\xi$ of balls are taken out
    from the urn $\Pi _0$ in a random order, and they are transferred to the urn $\Pi$.

    {\textbf{Scheme} $\left[\textbf{A}\right]$}. Let urn $\Pi _0$ contains $a$ white and
    $b$ black balls, and there are $c$ white and $d$ black balls in the urn $\Pi$. We consider
    special case $\xi =1$: one ball is taken from urn $\Pi _0$ at random and transferred to
    urn $\Pi$. Henceforth we denote by $A$ the event that the taken out ball to be white.
    We illustrate Scheme $\left[\textbf{A}\right]$ in Figure~1.

\begin{figure}[htbp]
\begin{center}
  % Requires \usepackage{graphicx}
  % replace aims_logo.eps by your figure file name
  \includegraphics[width=3.2in]{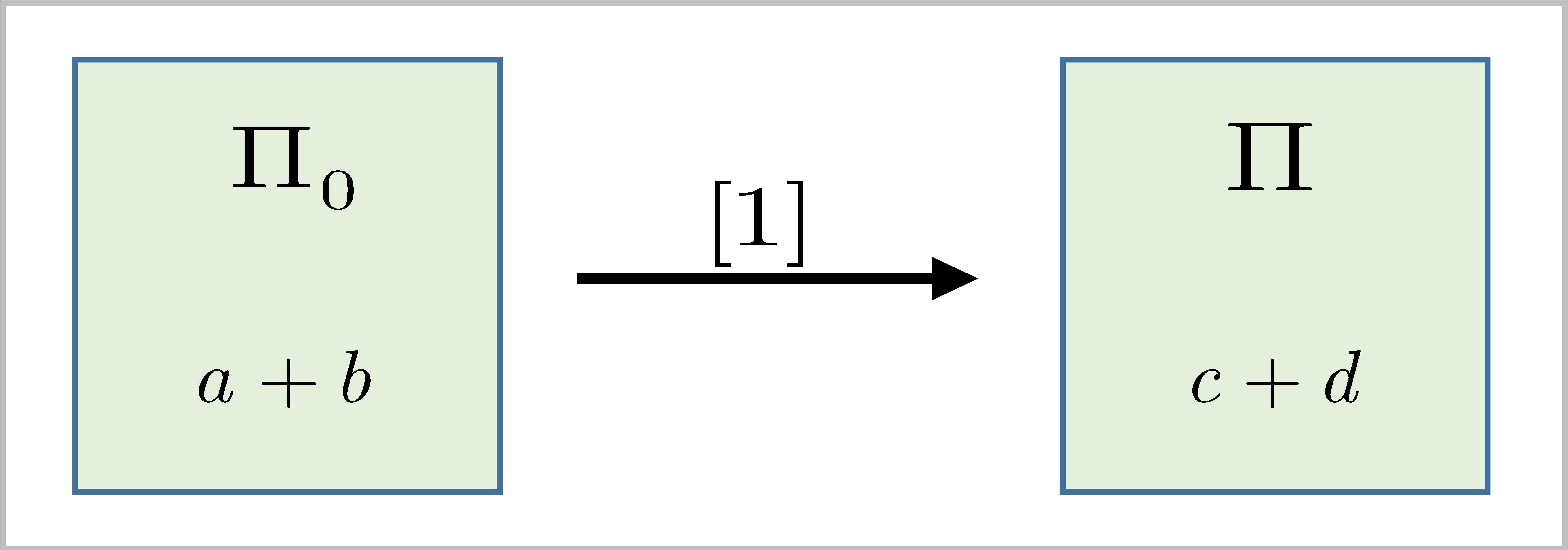}\\
  \caption{\textit{Scheme} $\left[\textbf{A}\right]$} \label{Figure_1}
  \end{center}
\end{figure}

    The probability
$$
    \textsf{P}_\Pi \left({\left. A \right|\xi =1}\right)
$$
    of that the ball randomly taken out from urn $\Pi$, under condition $\xi =1$, to be white,
    according to the classical scheme, is calculated by the following formula:
\begin{eqnarray}           \label{1}
    {\textsf{P}_\Pi \left({\left. A \right|\xi =1}\right)} \nonumber
    & = & {1 \over {\textsf{C}_{a+b}^1}}\left( {{{c+1}  \over
    {c+d+1}} \textsf{C}_a^1+ {c \over {c+d+1}} \textsf{C}_b^1}\right) \\       \nonumber
\\
    & = & {{a+(a+b)c} \over {(a+b)(c+d+1)}}={c \over {c+d+1}}+{a \over {a+b}}{1 \over {c+d+1}} \raise 1.8pt\hbox{.}
\end{eqnarray}
    where $\textsf{C}_n^m  = {{n!} \over {m!(n - m)!}}$ -- Newton's binomial coefficients. In fact, this
    is a well-known elementary formula for calculating probability in scheme $\left[\textbf{A}\right]$.
    But its generalization can serve as a starting point for the emergence of new formulas
    for the calculation and geometric interpretation of probabilities in some urn schemes.

    In this note we consider some modifications of scheme $\left[\textbf{A}\right]$
    in which a sequence of urns can be involved, and we propose formulas that simplify the
    method of directly calculating probabilities without using elements of combinatorics.

\section{Modifications of scheme $\left[\textbf{A}\right]$}

    In this section, we consider some schemes that be generalization of scheme $\left[\textbf{A}\right]$ .

    First we need the following assertion.

\begin{lemma*}              \label{MyLem:1}
    The following equality is fair:
\begin{equation}           \label{2}
    {{a + b} \over a}{1 \over {\textsf{C}_{a + b}^k }} \sum
    \limits_{i = 1}^k {i\textsf{C}_a^i \textsf{C}_b^{k - i}} = k.
\end{equation}
\end{lemma*}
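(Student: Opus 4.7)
My plan is to reduce the sum on the left of \eqref{2} to a single binomial coefficient using two standard identities, and then compare with the denominator $\textsf{C}_{a+b}^{k}$ by straightforward factorial arithmetic. The two tools I will rely on are the absorption identity $i\,\textsf{C}_a^i = a\,\textsf{C}_{a-1}^{i-1}$ and the Vandermonde convolution $\sum_{j=0}^{n}\textsf{C}_p^j\,\textsf{C}_q^{n-j}=\textsf{C}_{p+q}^{n}$.

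First, I would apply the absorption identity inside the sum to pull the factor $i$ through $\textsf{C}_a^i$, obtaining
$$
\sum_{i=1}^{k} i\,\textsf{C}_a^i \,\textsf{C}_b^{k-i}
= a\sum_{i=1}^{k} \textsf{C}_{a-1}^{i-1}\,\textsf{C}_b^{k-i}.
$$
Shifting the summation index to $j=i-1$, the sum runs over $j=0,\dots,k-1$ and is precisely the Vandermonde convolution for $\textsf{C}_{(a-1)+b}^{k-1}$, so the whole expression collapses to $a\,\textsf{C}_{a+b-1}^{k-1}$.

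Plugging this back into \eqref{2}, the left-hand side becomes
$$
\frac{a+b}{a}\cdot\frac{a\,\textsf{C}_{a+b-1}^{k-1}}{\textsf{C}_{a+b}^{k}}
= (a+b)\cdot\frac{\textsf{C}_{a+b-1}^{k-1}}{\textsf{C}_{a+b}^{k}}.
$$
To finish, I would rewrite both binomial coefficients in factorial form; the ratio simplifies to $k/(a+b)$, and the factor $(a+b)$ cancels to leave $k$, as required. (Equivalently, one can cite the elementary identity $k\,\textsf{C}_{a+b}^{k}=(a+b)\,\textsf{C}_{a+b-1}^{k-1}$ directly.)

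I do not foresee any real obstacle. The only mildly delicate point is the index shift: after applying the absorption identity, the sum naturally starts at $i=1$, but it is harmless to extend it to $i=0$ since $\textsf{C}_{a-1}^{-1}=0$, which is what lets me invoke Vandermonde cleanly. Everything else is bookkeeping with factorials.
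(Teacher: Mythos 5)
Your proof is correct and follows essentially the same route as the paper's: both arguments rest on the absorption identity $i\,\textsf{C}_a^i = a\,\textsf{C}_{a-1}^{i-1}$, the Vandermonde convolution, and the identity $k\,\textsf{C}_{a+b}^{k}=(a+b)\,\textsf{C}_{a+b-1}^{k-1}$. The only cosmetic difference is that you collapse the numerator sum to $a\,\textsf{C}_{a+b-1}^{k-1}$ and cancel factorials, whereas the paper matches the two sums term by term; the ingredients and their order are the same.
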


\begin{proof}
    Initially we are convincing that according to the properties of binomial coefficients, the following relations are true:
$$
    {k \over {a + b}}\textsf{C}_{a + b}^k = \textsf{C}_{a + b - 1}^{k - 1}
$$
    and
\begin{equation}           \label{3}
    \sum\limits_{i = 0}^k {\textsf{C}_a^i \textsf{C}_b^{k - i}} = \textsf{C}_{a + b}^k.
\end{equation}
    Using these relations, we have
$$
    {1 \over {{\displaystyle {k} \over \displaystyle {a + b}}\textsf{C}_{a + b}^k }}\sum\limits_{i = 1}^k {i\textsf{C}_a^i \textsf{C}_b^{k - i} }
    = {1 \over {\textsf{C}_{a + b - 1}^{k - 1} }}\sum\limits_{i = 1}^k {i\textsf{C}_a^i \textsf{C}_b^{k - i} }
    = {{\sum\nolimits_{i = 1}^k {i\textsf{C}_a^i \textsf{C}_b^{k - i}}}
    \over {\sum\nolimits_{i = 0}^{k - 1} {\textsf{C}_{a - 1}^i \textsf{C}_b^{k - i - 1}}}}\raise 1.8pt\hbox{.}
$$
    On the other hand
$$
    \sum\limits_{i = 1}^k {i\textsf{C}_a^i \textsf{C}_b^{k - i} }
    = a\sum\limits_{i = 1}^k {\textsf{C}_{a - 1}^{i - 1} \textsf{C}_b^{k - i} }
    = a\sum\limits_{i = 0}^{k - 1} {\textsf{C}_{a - 1}^i \textsf{C}_b^{k - i - 1}}.
$$
    Therefore
$$
    {1 \over {{\displaystyle{k} \over \displaystyle{a + b}}
    \textsf{C}_{a + b}^k }}\sum\limits_{i = 1}^k {i\textsf{C}_a^i \textsf{C}_b^{k - i}}=a.
$$
    The last equality is equivalent to \eqref{2}.
\end{proof}

    {\textbf{Scheme} $\left[\textbf{A}(k)\right]$}. Consider the urns with the compositions from
    scheme {$\left[\textbf{A}\right]$}. From the urn $\Pi _0$ randomly taken out $\xi =k$ number
    of balls, and they are transferred to urn $\Pi$ , here $k$ is any natural number is such that
    $k\leq a+b$. In this case, formula \eqref{1} is generalized in the following theorem. Denote
\begin{equation*}
    \alpha : = {c \over {c + d + k}} \quad {\textrm{and}} \quad \beta : = {{c + k} \over {c + d + k}}\raise 1.8pt\hbox{.}
\end{equation*}

\begin{theorem}              \label{MyTh:1}
    For scheme $\left[\textbf{A}(k)\right]$, the probability $\textsf{P}_\Pi \left({\left. A \right|\xi =k}\right)$
    that the ball, randomly taken out from urn $\Pi$, turns out to be white, is calculated by the following formula:
\begin{equation}           \label{4}
    \textsf{P}_\Pi \left({\left. A \right|\xi =k}\right) = \alpha + \left( \beta - \alpha \right)\theta,
\end{equation}
    where $\theta = \textsf{P}_{\Pi_0}(A)$ is the probability of the appearance of the white ball in the urn $\Pi _0$.
\end{theorem}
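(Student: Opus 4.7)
The plan is to apply the law of total probability, conditioning on the number of white balls transferred from $\Pi_0$ to $\Pi$. Let $B_i$ denote the event that exactly $i$ white balls (and therefore $k-i$ black balls) are among the $k$ balls moved to $\Pi$. Since $k$ balls are drawn from $\Pi_0$ uniformly at random, $\textsf{P}(B_i)$ is the hypergeometric weight $\textsf{C}_a^i \textsf{C}_b^{k-i}/\textsf{C}_{a+b}^k$. Conditional on $B_i$, the urn $\Pi$ contains $c+i$ white and $d+k-i$ black balls, so a single uniform draw gives white with probability $(c+i)/(c+d+k)$.

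Then I would write
\begin{equation*}
\textsf{P}_\Pi\bigl(A \,\big|\, \xi=k\bigr)
=\sum_{i=0}^{k} \frac{c+i}{c+d+k}\cdot
\frac{\textsf{C}_a^i \textsf{C}_b^{k-i}}{\textsf{C}_{a+b}^k}
\end{equation*}
and split the sum into the part containing $c$ and the part containing $i$:
\begin{equation*}
=\frac{c}{c+d+k}\cdot \frac{1}{\textsf{C}_{a+b}^k}\sum_{i=0}^{k}\textsf{C}_a^i \textsf{C}_b^{k-i}
+\frac{1}{c+d+k}\cdot \frac{1}{\textsf{C}_{a+b}^k}\sum_{i=1}^{k} i\,\textsf{C}_a^i \textsf{C}_b^{k-i}.
\end{equation*}

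Next I would invoke the two identities already at hand. The Vandermonde identity \eqref{3} collapses the first sum to $1$, leaving the contribution $\alpha = c/(c+d+k)$. The Lemma \eqref{2} rewrites the second sum as $k\cdot a/(a+b) = k\theta$, so the second term contributes $k\theta/(c+d+k)$. Finally I would note that $\beta-\alpha = (c+k)/(c+d+k) - c/(c+d+k) = k/(c+d+k)$, which turns the expression into $\alpha + (\beta-\alpha)\theta$, exactly \eqref{4}.

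I do not expect any real obstacle: the computation is essentially bookkeeping once the hypergeometric conditioning is set up, and the two algebraic identities needed are precisely those supplied by equation \eqref{3} and the preceding Lemma. The only place where a small slip could occur is in matching the coefficient $k/(c+d+k)$ with $\beta-\alpha$, so I would present that identification explicitly at the end to make the announced form of the answer transparent and to emphasize the affine dependence on $\theta = \textsf{P}_{\Pi_0}(A)$.
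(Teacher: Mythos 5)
Your proposal is correct and follows essentially the same route as the paper: both condition on the number $i$ of white balls among the $k$ transferred, write the hypergeometric sum $\sum_{i=0}^{k}(c+i)\textsf{C}_a^i\textsf{C}_b^{k-i}\big/\bigl((c+d+k)\textsf{C}_{a+b}^k\bigr)$, and then reduce it with the Vandermonde identity \eqref{3} and the Lemma \eqref{2} to obtain $\alpha+(\beta-\alpha)\theta$. The only difference is cosmetic: the paper warms up with the explicit case $k=2$ before stating the general sum, while you go directly to the general case and are somewhat more explicit about justifying the hypergeometric weights.
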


\begin{remark*}
    The expression on the right-hand side of formula \eqref{4} resembles intermediate points (values) in
    the Lagrange formula in mathematical analysis. It's obvious that
\begin{equation*}
    \alpha \leq \textsf{P}_\Pi \left({\left. A \right|\xi =k}\right) \leq \beta
\end{equation*}
    and the numbers $\alpha$ and $\beta$ are nothing more than the smallest and greatest values
    of all possible probabilities of the appearance of white ball in urn $\Pi$, provided that
    $\xi = k$ balls are randomly transferred from urn $\Pi _0$ to urn $\Pi$. It can be seen from
    the formula \eqref{4} that probability $\textsf{P}_\Pi \left({\left. A \right|\xi =k}\right)$
    is in such internal point of the ``value interval`` $[\alpha, \beta]$ that it divides
    this interval proportionally to $\theta$; see Figure~\ref{Figure_2} below.
\end{remark*}

\begin{proof}[Proof of Theorem~\ref{MyTh:1}]
    We first consider a case $\xi =2$. In this case by direct calculation we find
\begin{eqnarray*}
    {\textsf{P}_\Pi \left({\left. A \right|\xi =2}\right)}
    & = & {{(c + 2) \textsf{C}_a^2 + (c + 1) \textsf{C}_a^1 \textsf{C}_b^1 + c \textsf{C}_b^2} \over {(c + d + 2) \textsf{C}_{a + b}^2}} \\
\\
    & = & {{2a + (a + b)c} \over {(a + b)(c + d + 2)}} = {c \over {c + d + 2}} + {a \over {a + b}}{2 \over {c + d + 2}} \raise 1.8pt\hbox{.}
\end{eqnarray*}

    For arbitrary $\xi =k$, according to standard reasoning
\begin{equation}           \label{5}
    {\textsf{P}_\Pi \left({\left. A \right|\xi =2}\right)} = {1 \over {(c + d + k)
    \textsf{C}_{a + b}^k }}\sum\limits_{i = 0}^k {(c + i)\textsf{C}_a^i \textsf{C}_b^{k - i}}.
\end{equation} 
    Using formula \eqref{2} and equality \eqref{3} we transform
    the expression on the right-hand side of equality \eqref{5} to the form
\begin{eqnarray*}
    {1 \over {\textsf{C}_{a + b}^k }}\sum\limits_{i = 0}^k {(c + i)\textsf{C}_a^i \textsf{C}_b^{k - i}}
    & = & {c \over {\textsf{C}_{a + b}^k }}\sum\limits_{i = 0}^k {\textsf{C}_a^i \textsf{C}_b^{k - i}}
    + {1 \over {\textsf{C}_{a + b}^k }}\sum\limits_{i = 0}^k {i\textsf{C}_a^i \textsf{C}_b^{k - i} } \\
\\
    & = & c + {{ak} \over {a + b}} \raise 1.8pt\hbox{.}
\end{eqnarray*}
    Applying last equality in \eqref{5}, we have
\begin{equation}           \label{6}
    \textsf{P}_\Pi \left( {\left. A \right|\xi = k} \right)
    = {c \over {c + d + k}} + {a \over {a + b}}{k \over {c + d + k}}  \raise 1.8pt\hbox{.}
\end{equation} 
    Since
$$
    \textsf{P}_{\Pi _0}(A) = {a \over {a + b}}
$$
    according to our notation, the equality \eqref{6} is equivalent to \eqref{4}.

    Theorem~\ref{MyTh:1} is proved.
\end{proof}

    Continuing the discussion of Scheme~$\left[\textbf{A}(k)\right]$, we now set eyes on the geometric
    interpretation of Theorem~\ref{MyTh:1}. In the Figure~\ref{Figure_2}, in the orthogonal coordinate
    system  we put all possible values of probability $\theta = \textsf{P}_{\Pi _0}(A)$ along the
    vertical axis. On the horizontal axis we place the ``value interval`` $[\alpha, \beta]$ of
    probability $\textsf{P}_\Pi \left( {\left. A \right|\xi = k} \right)$. Draw the straight line
    $l_{\alpha}$ through points $\alpha$ and $\beta'$. Denote $\textsf{P}$ the intersection point
    of line $l_{\alpha}$ with the horizontal one $l_{\theta}$ passing through the point $\theta$.

    According to the similarity criterion of triangles, the following equalities are true:
$$
    {{\textsf{P}_\Pi - \alpha} \over {\beta - \alpha}}
    = {{\left| {\alpha \textsf{P}}\right|} \over {\left| {\alpha \beta'} \right|}}
    = {{\left| {{\textsf{P}_\Pi} \textsf{P}}\right|} \over {\left| {{\textsf{P}_\Pi}  {\textsf{P}'}} \right|}} = \theta.
$$
    This relation immediately implies formula \eqref{2}.
    The conclusion made corresponds with the geometric definition of probability, since
\begin{equation*}
    \textsf{P}_\Pi \left( {\left. A \right|\xi = k} \right)
    = {{measure \; of \; quadrangle \; S_\textsf{P}} \over {measure \; of \; unitary \; square \; S}}   \raise 1.8pt\hbox{.}
\end{equation*} 

\begin{figure}[htp]
\begin{center}
  % Requires \usepackage{graphicx}
  % replace aims_logo.eps by your figure file name
  \includegraphics[width=3.2in]{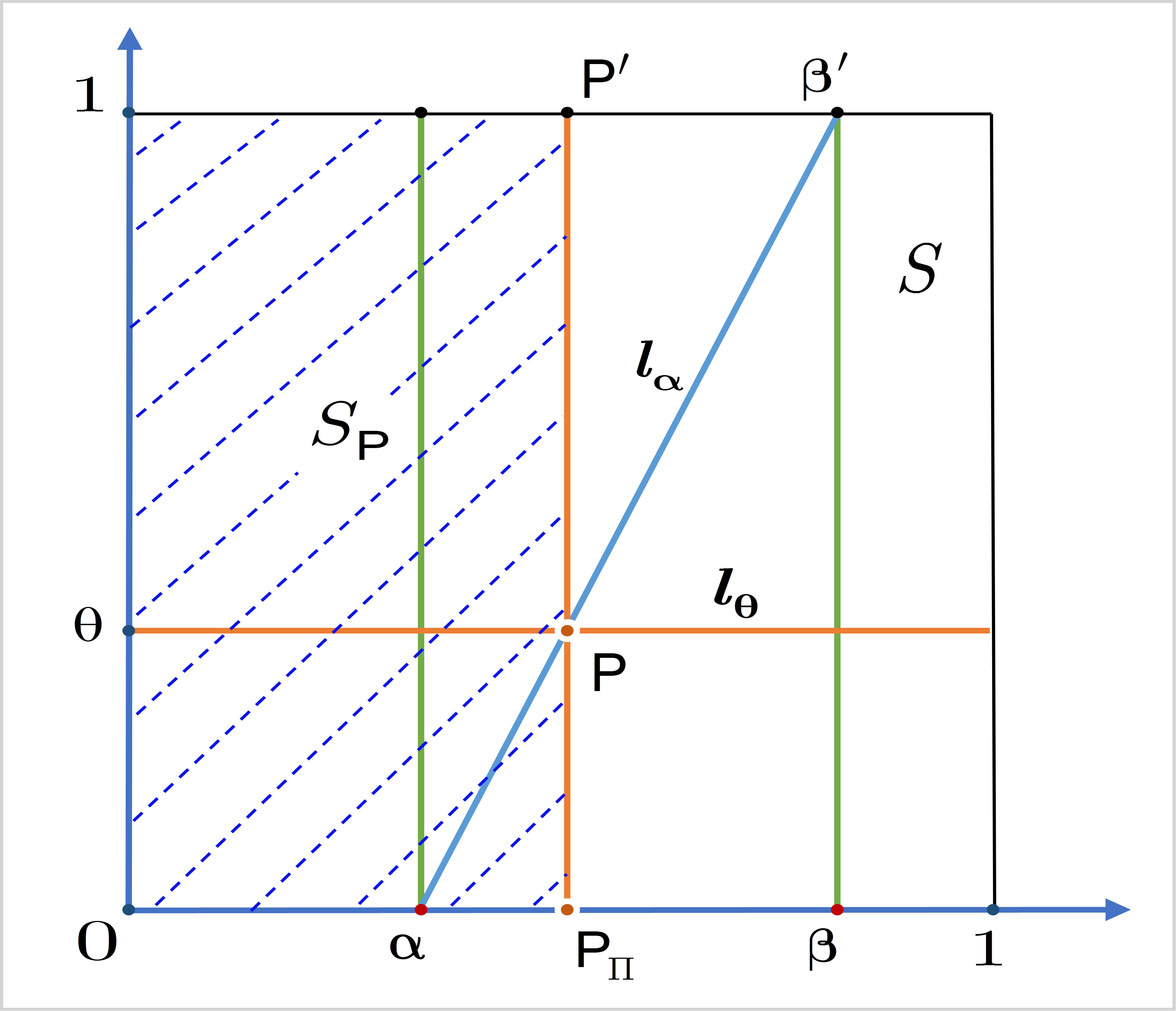}\\
  \caption{\textit{Geometric interpretation of} $\textsf{P}_\Pi \left({\left. A \right|\xi =k}\right)$} \label{Figure_2}
  \end{center}
\end{figure}

    Now in Scheme~$\left[\textbf{A}(k)\right]$ we introduce the following notation: let in urn $\Pi _0$ contains
    $M$ balls, of which $a$ are white, and in urn $\Pi$  there are $N$ balls, of which $c$ are white.

\begin{figure}[htp]
\begin{center}
  % Requires \usepackage{graphicx}
  % replace aims_logo.eps by your figure file name
  \includegraphics[width=3.2in]{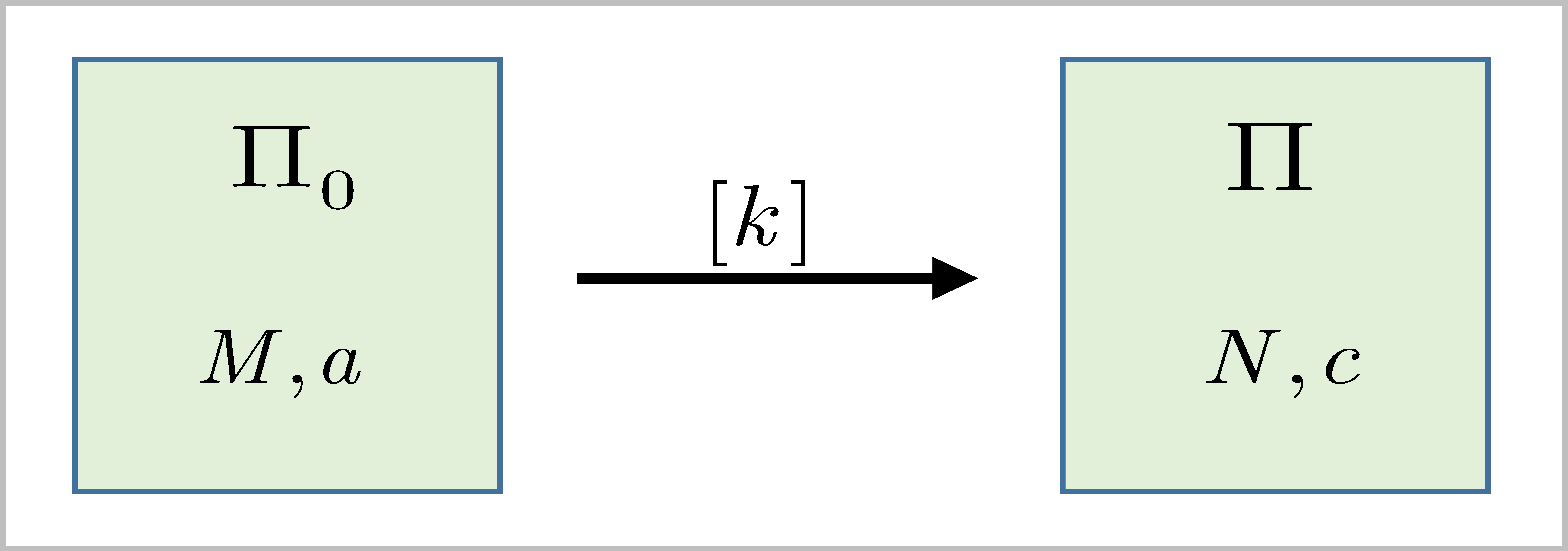}\\
  \caption{\textit{Scheme} $\left[\textbf{A}(k)\right]$} \label{Figure_3}
  \end{center}
\end{figure}

    In these designations Theorem~\ref{MyTh:1} can be reformulated as following.

\begin{theorem}              \label{MyTh:2}
    In scheme $\left[\textbf{A}(k)\right]$, the probability $\textsf{P}_\Pi \left({\left. A \right|\xi =k}\right)$
    that the ball, randomly taken out from urn $\Pi$, turns out to be white, is calculated by the following formula:
\begin{equation}           \label{7}
    \textsf{P}_\Pi \left({\left. A \right|\xi =k}\right) = {{\theta k + c} \over {N + k}}  \, \raise 1.8pt\hbox{,}
\end{equation}
    where $\theta = {a \mathord{\left/ {\vphantom {a M}} \right. \kern-\nulldelimiterspace} M}$.
\end{theorem}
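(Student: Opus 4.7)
The plan is to derive Theorem~\ref{MyTh:2} directly from Theorem~\ref{MyTh:1} by rewriting equation~\eqref{6} in the new notation. Since Theorem~\ref{MyTh:1} has already been established, no combinatorial identities (in particular, no reuse of the Lemma or of equation~\eqref{3}) should be needed; the content of Theorem~\ref{MyTh:2} is essentially a notational repackaging.

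First, I would record the correspondence between the two sets of symbols: the total number of balls in $\Pi_0$ is $M = a+b$, the total number of balls in $\Pi$ before the transfer is $N = c+d$, and the probability of drawing a white ball from $\Pi_0$ is $\theta = a/M = a/(a+b)$. I would then start from the intermediate formula \eqref{6}, namely
\begin{equation*}
    \textsf{P}_\Pi\!\left(\left. A \right| \xi = k\right) = \frac{c}{c+d+k} + \frac{a}{a+b}\cdot\frac{k}{c+d+k},
\end{equation*}
and simply substitute $c+d = N$ and $a/(a+b) = \theta$. This gives
\begin{equation*}
    \textsf{P}_\Pi\!\left(\left. A \right| \xi = k\right) = \frac{c}{N+k} + \theta\cdot\frac{k}{N+k} = \frac{\theta k + c}{N+k},
\end{equation*}
which is exactly \eqref{7}.

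Alternatively, one could start from the ``Lagrange-type'' form \eqref{4}, $\textsf{P}_\Pi(A\mid\xi=k) = \alpha + (\beta-\alpha)\theta$, and observe that $\alpha = c/(N+k)$ and $\beta-\alpha = k/(N+k)$, reaching the same conclusion after one line of algebra. Either route requires nothing beyond rewriting; there is no genuine obstacle to overcome, so I would present the argument as a single short computation and note explicitly that Theorem~\ref{MyTh:2} is a restatement of Theorem~\ref{MyTh:1} adapted to the parameters $(M,N,\theta)$ that will be convenient for the subsequent multi-urn generalizations.
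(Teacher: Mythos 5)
Your proof is correct and takes essentially the same route as the paper: the paper's own proof of Theorem~\ref{MyTh:2} consists precisely of noting that formula \eqref{7} follows from equality \eqref{6} under the substitutions $M=a+b$, $N=c+d$, $\theta=a/(a+b)$. Your write-up simply makes that one-line substitution explicit.
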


\begin{proof}[Proof of Theorem~\ref{MyTh:2}]
    Formula~\eqref{7} follows from equality \eqref{6}.
\end{proof}

\begin{exercise}              \label{MyEx:1}
    There are 15000 details in first warehouse $\Pi_0$, from them 11850 are standard. The second warehouse
    $\Pi$ contain 17000 details, from them 15800 are standard. 10000 details were randomly taken from the
    warehouse $\Pi_0$ and transferred to the second one. Find the probability of the event that the
    detail taken randomly from the second warehouse $\Pi$, to be appeared standard.
\end{exercise}

\begin{proof}[\textbf{Solution}]
    We find the desired probability by the formula \eqref{7} as follows:
\begin{equation*}
    \textsf{P}_\Pi \left({\left. A \right|\xi =10000}\right)
    = {{{\displaystyle{11850} \over \displaystyle{15000}}10000 + 15800} \over {27000}} = {{79} \over {90}}
\end{equation*}
    or the same as $\textsf{P}_\Pi \left({\left. A \right|\xi =10000}\right) =  {{79} / {90}} \approx 0.878$.
\end{proof}

    {\textbf{Scheme} $\left[\textbf{A}_n(\mathbf{k})\right]$}. There are $n+1$ urns: $\Pi$ and $\Pi_1, \Pi_2, \ldots , \Pi_n$.
    In urn $\Pi_1$ there are $M_1$ balls, of which $a_1$ are white; in urn $\Pi_2$ there are $M_2$ balls, of which $a_2$ are white;
    and so forth, in urn $\Pi_n$ there are $M_n$ balls, of which $a_n$ are white. In urn $\Pi$ there are $N$ balls, of which $c$ are white.

\begin{figure}[htp]
\begin{center}
  % Requires \usepackage{graphicx}
  % replace aims_logo.eps by your figure file name
  \includegraphics[width=3.8in]{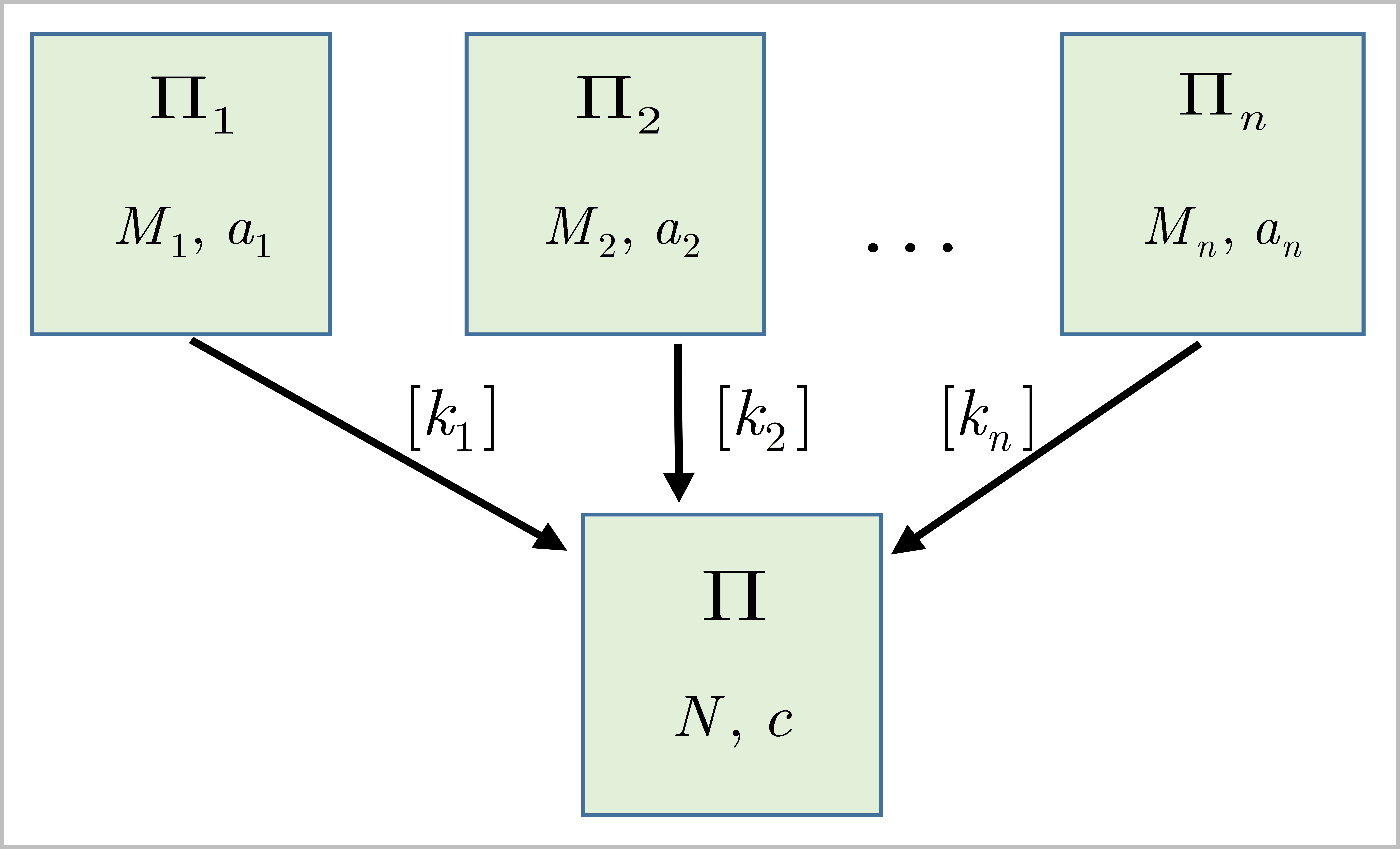}\\
  \caption{\textit{Scheme} $\left[\textbf{A}_n(\mathbf{k})\right]$} \label{Figure_4}
  \end{center}
\end{figure}

    From all urns $\Pi_1, \Pi_2, \, \ldots \, \Pi_n$ randomly selected $k_1, k_2, \, \ldots \,, k_n$
    balls respectively, and they are transferred to urn $\Pi$. Introduce the random vector
    $\mathbf{x} = \left(\xi_1, \xi_2, \ldots , \xi_n \right)$ characterizing the number of
    balls taken out of $n$ numbered urns and transferred to urn $\Pi$. The event
    $\{\mathbf{x} = \mathbf{k}\}$ means that the number of balls transferred to urn $\Pi$
    is equal to $\sum\nolimits_{i = 1}^n {k_i}$, the sum of the components of the vector
    $\mathbf{k} = \left(k_1, k_2, \ldots , k_n \right)$, where all $k_i\geq 0$. Scheme
    $\left[\textbf{A}_n(\mathbf{k})\right]$ is a natural generalization
    of scheme $\left[\textbf{A}(k)\right]$.

    In this case we have the following statement generalizing Theorem~\ref{MyTh:2}.

\begin{theorem}              \label{MyTh:3}
    In scheme $\left[\textbf{A}_n(\mathbf{k})\right]$, the probability
    $\textsf{P}_\Pi \left({\left. A \right|\mathbf{x} =\mathbf{k}}\right)$ that the ball, randomly
    taken out from urn $\Pi$, turns out to be white, is calculated by the following formula:
\begin{equation}           \label{8}
    \textsf{P}_\Pi \left({\left. A \right|\mathbf{x} =\mathbf{k}}\right)
    = {{\theta_{1} k_{1}+\theta_{2}k_{2} + \cdots + \theta_{n}k_{n} +c}
    \over {N + k_{1} +k_{2}+ \cdots +k_{n}}}  \, \raise 1.8pt\hbox{,}
\end{equation}
    where $\theta_{i} = {a_{i} \mathord{\left/ {\vphantom {a_{i} M_{i}}} \right. \kern-\nulldelimiterspace} M_{i}}$.
\end{theorem}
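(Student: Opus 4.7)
The plan is to prove Theorem~\ref{MyTh:3} by induction on the number $n$ of source urns, with Theorem~\ref{MyTh:2} serving both as the base case and as the engine of the inductive step. For $n=1$ the claim is exactly Theorem~\ref{MyTh:2} applied to the single source $\Pi_1 \to \Pi$ with $k = k_1$.

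For the inductive step I would perform the transfers in two batches: in stage one, move $k_1,\ldots,k_{n-1}$ balls from $\Pi_1,\ldots,\Pi_{n-1}$ into $\Pi$; in stage two, move $k_n$ balls from $\Pi_n$ into the enlarged urn $\Pi$. Let $\widetilde{c}$ denote the (random) number of white balls in $\Pi$ at the end of stage one and set $\widetilde{N} := N + k_1 + \cdots + k_{n-1}$ for the deterministic total. Conditional on the composition after stage one, Theorem~\ref{MyTh:2}, applied to the source $\Pi_n$ and target $\Pi$ of composition $(\widetilde{N},\widetilde{c})$, yields the conditional probability $(\theta_n k_n + \widetilde{c})/(\widetilde{N}+k_n)$ of drawing a white ball at the end. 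Taking expectations and pulling the constants $\widetilde{N}$, $k_n$, $\theta_n$ outside, one obtains
\[
\textsf{P}_\Pi \left({\left. A \right| \mathbf{x} = \mathbf{k}}\right)
= \frac{\theta_n k_n + \textsf{E}\,\widetilde{c}}{\widetilde{N}+k_n}\,.
\]

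To evaluate $\textsf{E}\,\widetilde{c}$, I would invoke the inductive hypothesis on the $(n-1)$-urn subscheme $\Pi_1,\ldots,\Pi_{n-1}\to\Pi$: by it, the probability that a ball drawn from $\Pi$ at the end of stage one is white equals $(c+\theta_1 k_1 + \cdots + \theta_{n-1}k_{n-1})/\widetilde{N}$. Since, conditional on the composition of $\Pi$, the probability of a white ball equals its proportion, this same quantity also equals $\textsf{E}\,\widetilde{c}/\widetilde{N}$, and one reads off $\textsf{E}\,\widetilde{c} = c + \theta_1 k_1 + \cdots + \theta_{n-1}k_{n-1}$; substituting in the displayed formula yields \eqref{8}. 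The only delicate point is justifying the two-stage reduction, namely that the final joint distribution of the composition of $\Pi$ does not depend on the order in which the batches are moved and that $\widetilde{N}$ is deterministic so it may be pulled out of the expectation. Both facts are immediate from the independence of the source urns and the exchangeability of balls within $\Pi$, so this is routine rather than a substantive obstacle.
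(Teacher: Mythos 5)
Your argument is correct, but it is structured quite differently from what the paper intends. The paper dismisses the proof of Theorem~\ref{MyTh:3} as a repetition of the arguments for Theorem~\ref{MyTh:2}, i.e.\ ultimately a repetition of the direct combinatorial computation behind \eqref{5}--\eqref{6}: one writes the probability as a sum over the joint hypergeometric outcome $(i_1,\ldots,i_n)$ of white balls drawn from the $n$ source urns, namely $\bigl(N+\sum_j k_j\bigr)^{-1}\bigl(\prod_j \textsf{C}_{M_j}^{k_j}\bigr)^{-1}\sum (c+i_1+\cdots+i_n)\prod_j \textsf{C}_{a_j}^{i_j}\textsf{C}_{M_j-a_j}^{k_j-i_j}$, and then collapses each index separately using the identities \eqref{2} and \eqref{3}. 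You instead induct on $n$, splitting the transfers into two batches, applying Theorem~\ref{MyTh:2} conditionally on the stage-one composition of $\Pi$, and identifying $\textsf{E}\,\widetilde{c}$ via the inductive hypothesis; in effect you replace the multi-index combinatorics by conditioning and linearity of expectation. Your route buys a cleaner argument that reuses the two-urn theorem as a black box and makes transparent why only the means $\theta_i k_i$ of the hypergeometric draws matter; the paper's direct computation buys self-containedness in the same elementary combinatorial style as Theorem~\ref{MyTh:1}, at the cost of heavier notation. The delicate points you flag (order-independence of the batches, the deterministic denominator $\widetilde{N}+k_n$, and the fact that the final draw depends on stage one only through $\widetilde{c}$) are indeed routine, and you are right that Theorem~\ref{MyTh:2} may legitimately be applied conditionally because the stage-two randomness is independent of the stage-one selections given $\widetilde{c}$.
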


\begin{proof}
    The proof may be omitted  since it repeats the same arguments as in the proof of Theorem~\ref{MyTh:2}.
\end{proof}

\begin{exercise}              \label{MyEx:2}
    Three urns contained lots of balls of different colors. Of all balls in these urns there were:
    10 white from all 20 balls in urn $\Pi_1$, 15 white from all 25 balls in urn $\Pi_2$ and
    20 white from all 30 balls in urn $\Pi_3$. From these urns 5, 10 and 15 balls, respectively,
    were randomly taken out, and then they were transferred to the empty urn $\Pi$. Next, all balls
    in urn $\Pi$ randomly and in the appropriate amount were divided into three parts, and they all
    returned back to the primary urns. Eventually the amounts of the balls in numbered urns remained
    the same. We seek out the probability that the ball taken randomly from any urn will
    turn out to be white. For instance we will find the probability for urn $\Pi_2$.
\end{exercise}

\begin{proof}[\textbf{Solution}]
    First using formula~\eqref{8} for the case of $n=3$, $N=c=0$ and $\mathbf{k} = \left(5, 10, 15 \right)$
    we find the probability $\textsf{P}_\Pi \left({\left. A \right|\mathbf{x} =\mathbf{k}}\right)$ that
    the ball, randomly taken out from urn $\Pi$, turns out to be white:
\begin{equation*}
    \textsf{P}_\Pi \left({\left. A \right|\mathbf{x} =\mathbf{k}}\right)= {{{\displaystyle {10}
    \over \displaystyle {20}} 5 + {\displaystyle {15}
    \over \displaystyle {25}} 10 + {\displaystyle {20}
    \over \displaystyle {30}} 15} \over {5 + 10 + 15}} = {{37} \over {60}} \raise 1.8pt\hbox{.}
\end{equation*}
    To find the sought-for probability $\textsf{P}_{\Pi_2} \left({\left. A \right|\xi =10}\right)$, we
    just use formula~\eqref{7} for the cases of $N=15$, $k=10$ and $\theta = {37/60}$. Note that in
    the initial state $\textsf{P}_{\Pi_2} \left(A \right) = {15/25}$. After transfer $15$ balls are
    left in the urn $\Pi_2$ so that ${15/25} = {c/15}$. Hence $c= {{15} \over {25}}15$. Thus we have
\begin{equation*}
    \textsf{P}_{\Pi_2} \left({\left. A \right|\xi =10}\right) = {{{\displaystyle {37}
    \over \displaystyle {60}} 10 + {\displaystyle {15}
    \over \displaystyle {25}} 15} \over {15 + 10}} = {{182} \over {300}}
\end{equation*}
    or the same as $\textsf{P}_{\Pi_2} \left({\left. A \right|\xi =10}\right) =  {{182} / {300}} \approx 0.607$.
\end{proof}

    {\textbf{Scheme}} $\left[\textbf{A}_n\left(\textbf{\textit{a}}_s, \mathbf{K}\right)\right]$.
    There is a sequence of urns $\left\{{\Pi _i ,i = 0,1,2, \ldots } \right\}$. Each urn contains
    particles of $s$ types $T_1$, $T_2$, \ldots , $T_s$. The state of the $i$th urn is determined
    by the $s$ dimensional vector $\textbf{\textit{a}}_s^i=\left({a_1^i, a_2^i, \ldots , a_s^i} \right)$,
    where $a_j^i$ is the number of particles of the type of $T_j$ in the $i$th urn. We observe the
    operation $\textbf{K}: = \left[ {k_0, k_1, \ldots} \right]$ which is as follows: $k_0$ particles
    are randomly taken out from urn $\Pi _0$ and transferred to urn $\Pi _1$; after $k_1$ particles
    are randomly taken out from urn $\Pi _1$ and transferred to urn $\Pi _2$ and so forth.  We use
    the symbol $\textbf{K}_m: = \left[ {k_0, k_1, \ldots , k_{m-1}} \right]$ in the case when the
    operation $\textbf{K}$ is observed in the $m$th step, where $m=1,2, \ldots$.

\begin{figure}[htp]
\begin{center}
  % Requires \usepackage{graphicx}
  % replace aims_logo.eps by your figure file name
  \includegraphics[width=5.4in]{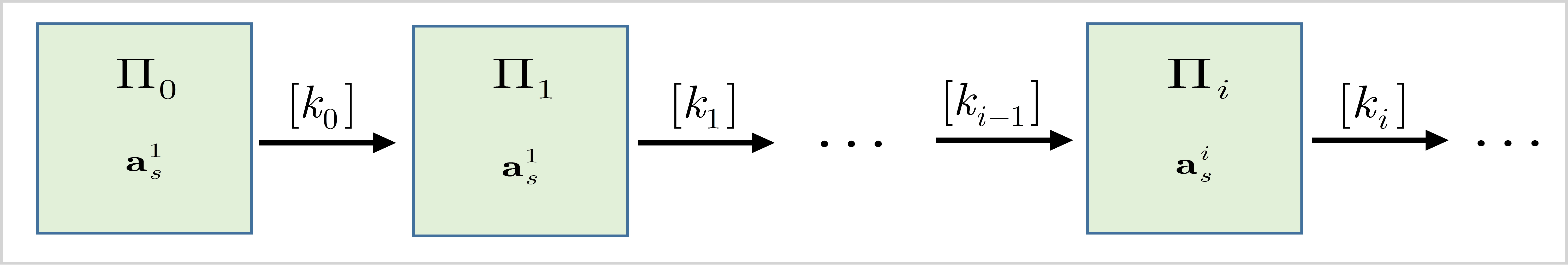}\\
  \caption{\textit{Scheme} $\left[\textbf{A}_n\left(\textbf{\textit{a}}_s, \mathbf{K}\right)\right]$} \label{Figure_5}
  \end{center}
\end{figure}

    Fix the type
    $T_\textsf{s}$, $1 \leq \textsf{s} \leq s$. Denote by $\textsf{P}_{\Pi_1} \left({\left. T_\textsf{s}
    \right|\textbf{K}_1}\right)$ the probability that the particle randomly taken from urn $\Pi_{1}$
    provided the operation $\textbf{K}_1$, will turn out to be a particle of the
    type $T_\textsf{s}$. According to Theorem~\ref{MyTh:2}, we have
\begin{equation}           \label{9}
    \textsf{P}_{\Pi_1} \left({\left. T_\textsf{s}\right|\textbf{K}_1}\right)
    = \alpha_{\textsf{s}1} + \left(\beta_{\textsf{s}1} - \alpha_{\textsf{s}1} \right)\theta_{\textsf{s}} ,
\end{equation}
    where $\theta_{\textsf{s}}  = \textsf{P}_{\Pi _0} \left( {T_\textsf{s} } \right)$ is the probability
    of an appearance of the particle of type $T_\textsf{s}$ in urn $\Pi _0$ and
\begin{equation*}
    \alpha_{\textsf{s}1} = {{a_\textsf{s}^1} \over {\sum\nolimits_{j = 1}^s {a_j^1}+ k_0}} \quad {\textrm{and}} \quad
    \beta_{\textsf{s}1} = {{a_\textsf{s}^1 + k_0} \over {\sum\nolimits_{j = 1}^s {a_j^1}+ k_0}}  \raise 1.8pt\hbox{.}
\end{equation*}

    Continuing discussions, we can generalize the formula \eqref{9} for all $\textsf{P}_{\Pi_m} \left({\left. T_\textsf{s}
    \right|\textbf{K}_m}\right)$, $m=2,3, \ldots $, and obtain a recurrence formula that would allows us to find the
    probability of appearance of particle of the type $T_\textsf{s}$ in arbitrary urn at the any step of operation
    $\textbf{K}$. We have the following theorem.

\begin{theorem}              \label{MyTh:4}
    In scheme $\left[\textbf{A}_n\left(\textbf{\textit{a}}_s, \mathbf{K}\right)\right]$, the probability
    $\textsf{P}_{\Pi_m} \left({\left. T_\textsf{s} \right|\textbf{K}_m}\right)$ that the particle,
    randomly taken out from urn $\Pi_m$, provided that operation $\textbf{K}_m$, turns out to be the
    particle of type $T_\textsf{s}$, is calculated by the following formula:
\begin{equation}           \label{10}
    \textsf{P}_{\Pi_m} \left({\left. T_\textsf{s} \right|\textbf{K}_m}\right)
    = \alpha_{\textsf{s}1} + \left(\beta_{\textsf{s}1} - \alpha_{\textsf{s}1} \right)
    \textsf{P}_{\Pi_{m-1}} \left({\left. T_\textsf{s} \right|\textbf{K}_{m-1}}\right),
\end{equation}
    where $m=2,3, \ldots$
    \begin{equation*}
    \alpha_{\textsf{s}1} = {{a_\textsf{s}^m} \over {\sum\nolimits_{j = 1}^s {a_j^m}+ k_{m-1}}} \quad {\textrm{and}} \quad
    \beta_{\textsf{s}1} = {{a_\textsf{s}^m + k_{m-1}} \over {\sum\nolimits_{j = 1}^s {a_j^m}+ k_{m-1}}}  \raise 1.8pt\hbox{,}
\end{equation*}
     here $\textsf{P}_{\Pi_1} \left({\left. T_\textsf{s}\right|\textbf{K}_1}\right)$ is in \eqref{9}.
\end{theorem}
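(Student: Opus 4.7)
The plan is to establish \eqref{10} by induction on the step index $m$, with the base case $m=1$ being precisely \eqref{9}, already obtained from Theorem~\ref{MyTh:2}. The substance is therefore the inductive step, which I would handle by reducing the $m$th transfer to a single instance of Scheme~$\left[\textbf{A}(k)\right]$ and applying Theorem~\ref{MyTh:2}.

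Concretely, I would view the last operation---drawing $k_{m-1}$ particles at random from $\Pi_{m-1}$ and placing them in $\Pi_m$---as the basic two-urn Scheme~$\left[\textbf{A}(k)\right]$ with source $\Pi_{m-1}$, target $\Pi_m$, and transfer size $k=k_{m-1}$. Fix attention on type $T_{\textsf{s}}$, and let $J$ denote the number of type-$T_{\textsf{s}}$ particles among the $k_{m-1}$ transferred. Given $J=j$, a uniform random draw from $\Pi_m$ yields type $T_{\textsf{s}}$ with probability $(a_{\textsf{s}}^m+j)/(\sum_{j=1}^s a_j^m + k_{m-1})$, which is affine in $j$. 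Taking expectations therefore gives
\[
\textsf{P}_{\Pi_m}\!\left(\left.T_{\textsf{s}}\right|\textbf{K}_m\right)
= \frac{a_{\textsf{s}}^m + \mathsf{E}\!\left[J\mid \textbf{K}_m\right]}{\sum_{j=1}^s a_j^m + k_{m-1}}.
\]
By exchangeability of the $k_{m-1}$ random draws from $\Pi_{m-1}$, each transferred particle has the same marginal probability of being type $T_{\textsf{s}}$, and that marginal probability is exactly $\textsf{P}_{\Pi_{m-1}}(T_{\textsf{s}}\mid \textbf{K}_{m-1})$. Hence $\mathsf{E}[J\mid \textbf{K}_m]=k_{m-1}\,\textsf{P}_{\Pi_{m-1}}(T_{\textsf{s}}\mid \textbf{K}_{m-1})$, and splitting the fraction into its $a_{\textsf{s}}^m$ and $k_{m-1}$ pieces yields exactly the right-hand side of \eqref{10}, since $\beta_{\textsf{s}m}-\alpha_{\textsf{s}m}=k_{m-1}/(\sum_{j=1}^s a_j^m + k_{m-1})$.

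The step I expect to be the main obstacle is the honest justification of this reduction: Theorem~\ref{MyTh:2} was proved under the assumption that the source urn has a deterministic composition, whereas after $\textbf{K}_{m-1}$ the composition of $\Pi_{m-1}$ is itself random. The resolution is that formula~\eqref{7} is affine in the source-urn success fraction $\theta = a/M$, so conditioning on every possible post-$\textbf{K}_{m-1}$ composition of $\Pi_{m-1}$, applying Theorem~\ref{MyTh:2} within each, and invoking the total probability formula shows that the only input from $\Pi_{m-1}$ that survives the averaging is the marginal probability $\textsf{P}_{\Pi_{m-1}}(T_{\textsf{s}}\mid \textbf{K}_{m-1})$ itself. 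Once this tower-property argument is spelled out, the recurrence \eqref{10} drops out and the induction closes.
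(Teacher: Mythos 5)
Your proposal is correct, and it is worth noting that the paper itself supplies \emph{no} proof of Theorem~\ref{MyTh:4}: the theorem is stated after the remark that one ``can generalize'' formula \eqref{9}, and is followed immediately by Exercise~3, so the intended argument is evidently just ``apply Theorem~\ref{MyTh:2} at each step.'' Your write-up follows that intended route (induction on $m$, with \eqref{9} as the base case), but it makes explicit the one point that actually requires an argument and that the paper silently passes over: Theorem~\ref{MyTh:2} is proved for a source urn of \emph{deterministic} composition, whereas after $\mathbf{K}_{m-1}$ the composition of $\Pi_{m-1}$ is random. Your resolution is the right one, and you in fact give it twice in equivalent forms --- once via exchangeability of the $k_{m-1}$ draws, which gives $\mathsf{E}[J\mid\mathbf{K}_m]=k_{m-1}\,\textsf{P}_{\Pi_{m-1}}(T_{\textsf{s}}\mid\mathbf{K}_{m-1})$ directly, and once via conditioning on the post-$\mathbf{K}_{m-1}$ composition and using that \eqref{7} is affine in $\theta$ so that only the mean of the random success fraction survives the averaging. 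Either one suffices; the affinity of the conditional probability in $j$ (equivalently in $\theta$) is precisely why the recurrence closes on the single scalar $\textsf{P}_{\Pi_{m-1}}(T_{\textsf{s}}\mid\mathbf{K}_{m-1})$ rather than requiring the full distribution of $\Pi_{m-1}$'s composition. Your final bookkeeping, $\beta_{\textsf{s}m}-\alpha_{\textsf{s}m}=k_{m-1}/\bigl(\sum_{j=1}^{s}a_j^m+k_{m-1}\bigr)$, matches \eqref{10} (modulo the paper's typographical reuse of the subscript ``$\textsf{s}1$'' where ``$\textsf{s}m$'' is clearly meant). In short: same approach as the paper intends, but yours is a complete proof where the paper offers none.
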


\begin{exercise}              \label{MyEx:3}
    Three urns contain balls of white, black and yellow colors. Urn $\Pi_0$ contains 310 white,
    350 black and 370 yellow balls; urn $\Pi_1$ contains no white ball, 480 black and 530 yellow
    balls; urn $\Pi_2$ contains 600 white, 640 black and 670 yellow balls. 500 balls are randomly
    taken from urn $\Pi_0$ and they transferred to the urn $\Pi_1$ and then 600 balls are randomly
    taken from urn $\Pi_1$ and transferred to the urn $\Pi_2$; see Figure~\ref{Figure_6} below.
    We are looking for the probabilities that a ball randomly selected from urn $\Pi_2$ will
    turn out to be white, black, and yellow, respectively.
\end{exercise}

\begin{figure}[htp]
\begin{center}
  % Requires \usepackage{graphicx}
  % replace aims_logo.eps by your figure file name
  \includegraphics[width=3.8in]{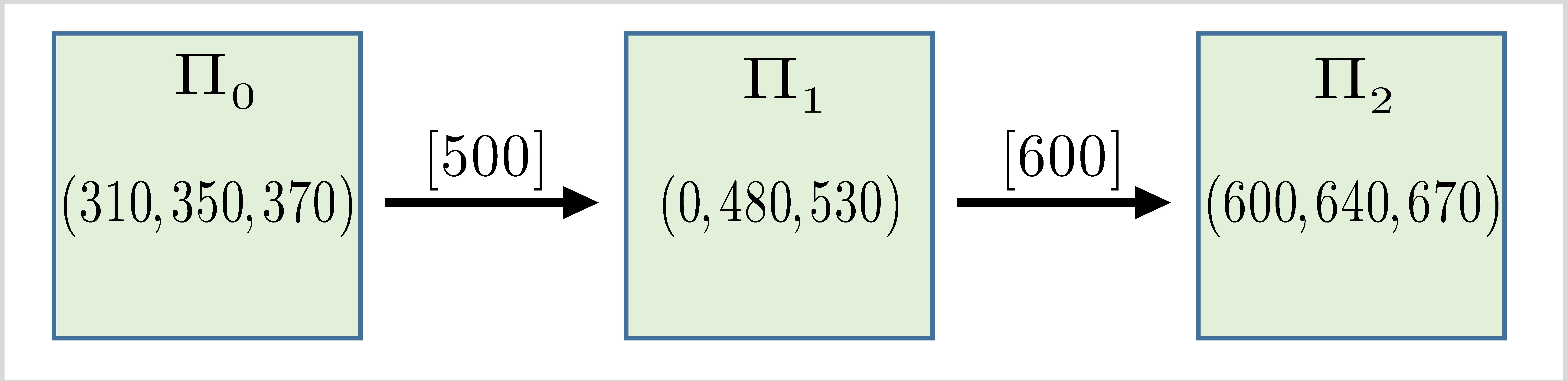}\\
  \caption{\textit{Illustration of Exercise}~\ref{MyEx:3}} \label{Figure_6}
  \end{center}
\end{figure}

\begin{proof}[\textbf{Solution}]
    Let $T_\textsf{w}$ be white, $T_\textsf{b}$ -- black and $T_\textsf{y}$ -- yellow types. It's obvious that
\begin{equation*}
    \textsf{P}_{\Pi_0} \left({T_\textsf{w}}\right) = {{310} \over {1030}}
    \quad {\textrm{and}} \quad
    \textsf{P}_{\Pi_0} \left({T_\textsf{b}}\right) = {{350} \over {1030}}
    \quad {\textrm{and}} \quad
    \textsf{P}_{\Pi_0} \left({T_\textsf{y}}\right) = {{370} \over {1030}} \raise 1.8pt\hbox{.}
\end{equation*}
    We can easily calculate, that
\begin{equation*}
    \alpha_{\textsf{w}1} = {{0} \over {1510}} \quad {\textrm{and}} \quad
    \beta_{\textsf{w}1} =  {{500} \over {1510}} ;
\end{equation*}
    and
\begin{equation*}
    \alpha_{\textsf{b}1} = {{480} \over {1510}} \quad {\textrm{and}} \quad
    \beta_{\textsf{b}1} =  {{980} \over {1510}} ;
\end{equation*}
    and
\begin{equation*}
    \alpha_{\textsf{y}1} = {{530} \over {1510}} \quad {\textrm{and}} \quad
    \beta_{\textsf{y}1} =  {{1030} \over {1510}} \raise 1.8pt\hbox{.}
\end{equation*}
    Using the data above and formula~\eqref{9} we obtain
\begin{equation*}
    \textsf{P}_{\Pi_1} \left({\left. T_\textsf{w}\right|\textbf{K}_1}\right)
    = 0 + \left({{500} \over {1510}} - 0\right){{310} \over {1030}} = {{1550} \over {15553}} \, \raise 1.8pt\hbox{,}
\end{equation*}

\begin{equation*}
    \textsf{P}_{\Pi_1} \left({\left. T_\textsf{b}\right|\textbf{K}_1}\right)
    = {{480} \over {1510}} + \left({{980} \over {1510}} - {{480} \over {1510}}\right){{350} \over {1030}}
    = {{6694} \over {15553}} \, \raise 1.8pt\hbox{,}
\end{equation*}

\begin{equation*}
    \textsf{P}_{\Pi_1} \left({\left. T_\textsf{y}\right|\textbf{K}_1}\right)
    = {{530} \over {1510}} + \left({{1030} \over {1510}} - {{530} \over {1510}}\right){{370} \over {1030}}
    = {{7309} \over {15553}} \, \raise 1.8pt\hbox{,}
\end{equation*}

    By the same way and using the formula~\eqref{10} we can find sought probabilities as follows:
\begin{equation*}
    \textsf{P}_{\Pi_2} \left({\left. T_\textsf{w}\right|\textbf{K}_2}\right)
    = {{600} \over {2510}} + \left({{1200} \over {2510}} - {{600} \over {2510}}\right){{1550} \over {15553}}
    = {{1026180} \over {3903803}} \left(\approx 0.263\right),
\end{equation*}

\begin{equation*}
    \textsf{P}_{\Pi_2} \left({\left. T_\textsf{b}\right|\textbf{K}_2}\right)
    = {{640} \over {2510}} + \left({{1240} \over {2510}} - {{640} \over {2510}}\right){{6694} \over {15553}}
    = {{1397032} \over {3903803}} \left(\approx 0.358\right),
\end{equation*}

\begin{equation*}
    \textsf{P}_{\Pi_2} \left({\left. T_\textsf{y}\right|\textbf{K}_2}\right)
    = {{670} \over {2510}} + \left({{1270} \over {2510}} - {{670} \over {2510}}\right){{7309} \over {15553}}
    = {{1480591} \over {3903803}} \left(\approx 0.379\right).
\end{equation*}
    It can be checked that $\textsf{P}_{\Pi_2} \left({\left. T_\textsf{w}\right|\textbf{K}_2}\right)+
    \textsf{P}_{\Pi_2} \left({\left. T_\textsf{b}\right|\textbf{K}_2}\right)+
    \textsf{P}_{\Pi_2} \left({\left. T_\textsf{y}\right|\textbf{K}_2}\right)=1$.
\end{proof}

%%%
\section*{Acknowledgements}

   The basic presuppositions of the formulas proposed in the paper was initiated due to
   Yu.Khodjaev -- the student of Karshi State University. Dr.~A.Imomov, being his scientific
   supervisor, mathematically systematized and stood on the whole process of proofs of formulas.

%%%

\end{document}